\newtheorem{theorem}{Theorem}
\newtheorem{proposition}{Proposition}
\newtheorem{corollary}{Corollary}
\newtheorem{lemma}{Lemma}
\newtheorem{remark}{Remark}
\theoremstyle{remark}
\newtheorem{example}{Example}
\newcommand{\re}{\text{\rm Re }}
\newcommand{\im}{\text{\rm Im }}
\begin{document}
\title[Two-functional conjecture]{Sufficient conditions in the two-functional conjecture for
univalent functions}
\author[D.~Prokhorov]{Dmitri Prokhorov}

\dedicatory{Dedicated to Professor Promarz M. Tamrazov}

\subjclass[2010]{Primary 30C75; Secondary 30C35, 49K15} \keywords{The two-functional conjecture,
L\"owner equation, Hamilton function, optimal trajectory.}
\address{D.~Prokhorov: Department of Mathematics and Mechanics, Saratov State University, Saratov
410012, Russia} \email{ProkhorovDV@info.sgu.ru}

\begin{abstract}
The two-functional conjecture says that if a function $f$ analytic and univalent in the unit disk
maximizes $\re\{L\}$ and $\re\{M\}$ for two continuous linear functionals $L$ and $M$, $L\neq cM$
for any $c>0$, then $f$ is a rotation of the Koebe function. We use the L\"owner differential
equation to obtain sufficient conditions in the two-functional conjecture and compare the
sufficient conditions with necessary conditions.
\end{abstract}
\maketitle

\section{Introduction}

For the class $S$ of functions $f(z)=z+a_2z^2+\dots$ analytic and univalent in the unit disk
$\mathbb D$, the two-functional conjecture arose from the description of functions $f\in S$ which
satisfy two independent so-called $\mathcal D_n$-equations, see, e.g. [8, p.347-351] and references
therein. This conjecture says that if a function $f\in S$ maximizes $\re\{L\}$ and $\re\{M\}$ for
two continuous linear functionals $L$ and $M$ nonconstant on $S$, $L\neq cM$ for any $c>0$, then
$f$ is a rotation of the Koebe function $k(z)=z(1-z)^{-2}$.

Each continuous linear functional on the space $\mathcal A$ of all analytic functions in $\mathbb
D$ has the form $L(h)=\sum_{n=0}^{\infty}c_na_n$, $h(z)=\sum_{n=0}^{\infty}a_nz^n$, for some
sequence of complex numbers $c_n$, $\lim\sup_{n\to\infty}|c_n|^{1/n}<1$, see, e.g. [8, p.280]. The
known results \cite{Bakhtin-1}, \cite{Bakhtin-2}, \cite{Goh}, \cite{Starkov}, see also survey
\cite{Prokhorov1}, for special cases of the two-functional conjecture are restricted to functionals

\begin{equation}
L(f)=\sum_{n=2}^n\overline{\lambda}_ka_k,\;\;\lambda_n\neq0,\;\;\text{and}\;\;
M(f)=\sum_{n=2}^m\overline{\mu}_ka_k,\;\;\mu_m\neq0. \label{fun}
\end{equation}
We shall be assuming $m=n$, since the conjecture is true for the case $m\neq n$ if it is proved for
$m=n$, \cite{Goh}.

The two-functional conjecture if it is true characterizes an exclusive role of the Koebe function
and its rotations both analytically and geometrically. After de Branges \cite{Branges} proved the
Bieberbach conjecture it became clear that the Koebe function $k(z)$ maximizes simultaneously
$\re\{L_j\}$ for $n-1$ independent continuous linear functionals $L_k=a_k+a_n$, $k=2,\dots,n-1$,
and $L_n=a_n$. This means geometrically that $k(z)$ delivers a boundary point $x^0=(2,\dots,n)$ to
the value set
$$V_n=\{(a_2,\dots,a_n): f\in S\},$$ and there is at least $(n-1)$-dimensional set of support
hyperplanes for the $(2n-2)$-dimensional set $V_n$ through $x^0$, namely, hyperplanes with normal
vectors $\Lambda_n=(0,\dots,0,1)$ and $\Lambda_k=(0,\dots,0,1,0,\dots,0,1)$, the unit is at the
$(k-1)$-th place. The result due to Bshouty and Hengartner \cite{Bshouty} shows that the set of
support hyperplanes for $V_n$ through $x^0$ is exactly $(n-1)$-dimensional since if at least one of
coefficients $\lambda_k$ in (\ref{fun}) is not real, then $k(z)$ does not maximize $\re\{L\}$.

So the two-functional conjecture supposes that if a function $f\in S$ maximizes $\re\{L\}$ and $f$
is different from any rotation of $k(z)$, then there is only one support hyperplane for $V_n$
through a boundary point $x_f\in\partial V_n$ delivered by $f$.

In the present article we give sufficient conditions for the two-functional conjecture in terms of
coefficients $\overline{\lambda}_k$, $\overline{\mu}_k$, $k=2,\dots,n$, and coefficients of an
extremal function $f\in S$. We prove the following theorem.

\begin{theorem}
Let a function $f(z)=z+a_2z^2_+\dots$ maximize $\re\{L\}$ and $\re\{M\}$ on $S$ where $L$ and $M$
are given by (\ref{fun}), $m=n$. Suppose that the trigonometric polynomial
\begin{equation}
\re\left(-\sum_{k=2}^n\sum_{j=1}^{n-k+1}\overline{\lambda}_{j+k-1}ja_je^{-i(k-1)u}\right)
\label{th1}
\end{equation}
attains its maximum on $[0,2\pi]$ at $u=\pi$, and
\begin{equation}
\sum_{k=2}^n\sum_{j=1}^{n-k+1}(-1)^k(k-1)^2\;\re(((1-\alpha)\overline{\lambda}_{j+k-1}+
\alpha\overline{\mu}_{j+k-1})ja_j)\neq0,\;\;\alpha\in[0,1]. \label{th2}
\end{equation}
Then
\begin{equation}
\sum_{k=2}^n\sum_{j=1}^{n-k+1}(-1)^k(k-1)\im(\overline{\lambda}_{j+k-1}ja_j)=
\sum_{k=2}^n\sum_{j=1}^{n-k+1}(-1)^k(k-1)\im(\overline{\mu}_{j+k-1}ja_j). \label{th3}
\end{equation}
If additionally
\begin{equation}
\sum_{k=2}^n\sum_{j=1}^{n-k+1}(-1)^k(k-1)^{m+1}\im(\overline{\lambda}_{j+k-1}ja_j)= \label{th4}
\end{equation}
$$\sum_{k=2}^n\sum_{j=1}^{n-k+1}(-1)^k(k-1)^{m+1}\im(\overline{\mu}_{j+k-1}ja_j)=0,\;\;\;
m=1,2,\dots,$$ then $f(z)$ is the Koebe function $k(z)$.
\end{theorem}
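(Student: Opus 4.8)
The plan is to realize $f$ through the L\"owner--Kufarev parametric representation and to read the two maximization problems as one optimal control problem whose control is the driving argument $u(t)$ in $\kappa(t)=e^{iu(t)}$. Writing $f(z)=z+\sum_{k\ge2}a_kz^k$, I expect the expression inside (\ref{th1}), say $\Phi_\lambda(u)$, to be the control-dependent part of the Pontryagin Hamiltonian for $\re\{L\}$ evaluated along the extremal trajectory, and $\Phi_\mu(u)$ to be the corresponding object for $\re\{M\}$. The organizing remark is the two-functional trick: because $f$ maximizes $\re\{L\}$ and $\re\{M\}$ separately and $\alpha\in[0,1]$, it also maximizes $\re\{(1-\alpha)L+\alpha M\}$ for every $\alpha$, so the whole segment of functionals is extremized by the single function $f$ and hence by a single control $u(t)$.

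First I would record the low-order behaviour of $\Phi_\lambda$ at $u=\pi$. Grouping (\ref{th1}) by the frequency $q=k-1$ as $\Phi_\lambda(u)=-\re\sum_{q=1}^{n-1}B_qe^{-iqu}$ with $B_q=\sum_{j=1}^{n-q}\overline{\lambda}_{j+q}ja_j$, a direct differentiation gives
$$\Phi_\lambda'(\pi)=\sum_{k=2}^n\sum_{j=1}^{n-k+1}(-1)^k(k-1)\im(\overline{\lambda}_{j+k-1}ja_j),$$
$$\Phi_\lambda''(\pi)=-\sum_{k=2}^n\sum_{j=1}^{n-k+1}(-1)^k(k-1)^2\re(\overline{\lambda}_{j+k-1}ja_j),$$
and the same formulas with $\mu$, or with $(1-\alpha)\lambda+\alpha\mu$, in place of $\lambda$. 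Thus (\ref{th2}) says precisely that $\Phi_{(1-\alpha)\lambda+\alpha\mu}''(\pi)\ne0$ for all $\alpha$, while (\ref{th1}) forces $\Phi_\lambda''(\pi)\le0$; by continuity in $\alpha$ and non-vanishing the second derivative stays strictly negative along the whole segment, so $u=\pi$ is a strict local maximum of each $\Phi_{(1-\alpha)\lambda+\alpha\mu}$.

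By the maximum principle the optimal control maximizes $\Phi_\lambda$ pointwise, so (\ref{th1}) places it at $u=\pi$; since that same control also maximizes $\Phi_\mu$, the point $u=\pi$ must be critical for $\Phi_\mu$ too. Hence $\Phi_\lambda'(\pi)=\Phi_\mu'(\pi)=0$, which by the first displayed formula is exactly (\ref{th3}). For the last assertion I would interpret (\ref{th4}): differentiating the criticality relation along the flow multiplies the weight $(k-1)$ by a further factor $(k-1)$ for each derivative, since the $k$-th coefficient mode evolves at rate proportional to $(k-1)$, so (\ref{th4}) asserts the vanishing of the switching function together with all its derivatives. Equivalently, with $c_q=(-1)^{q+1}\im(B_q)$ the relations $\sum_{q=1}^{n-1}c_qq^s=0$ hold for $s=1,2,\dots$; the Vandermonde-type system is nonsingular, forcing $c_q=0$, i.e.\ $\im(B_q)=0$ for every $q$ (and likewise for $\mu$). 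This keeps the maximum of the switching function frozen at $u=\pi$ for all $t$, so together with the strict concavity from (\ref{th2}) the optimal control is the constant $u(t)\equiv\pi$; the L\"owner trajectory driven by a constant $\kappa$ is a rotation of the Koebe function, and the normalizations in (\ref{th1})--(\ref{th4}) single out $k(z)$.

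I expect the main obstacle to be the passage from the local data at $u=\pi$ to global constancy of the control. Conditions (\ref{th1}) and (\ref{th2}) only constrain the Hamiltonian at one reference, and the maximum principle by itself does not prevent the control from drifting or switching at later times; excluding this is exactly the role of the infinite hierarchy (\ref{th4}), and the delicate points are the identification of (\ref{th1}) with the reduced Hamiltonian and the rigorous ``differentiation along the flow produces a factor $(k-1)$'' step that turns (\ref{th4}) into the moment conditions above. A secondary technical burden is justifying the parametric representation and the maximum principle for the infinite-horizon truncated-coefficient problem and ensuring the extremal control is genuinely well defined.
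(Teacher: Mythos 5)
Your framework is the paper's: Löwner parametrization, the Pontryagin pseudo-Hamiltonian, and the observation that $f$ maximizes the whole segment $\re\{(1-\alpha)L+\alpha M\}$, $\alpha\in[0,1]$, with one and the same driving function. Your formulas for $\Phi'_\lambda(\pi)$ and $\Phi''_\lambda(\pi)$ are correct, your derivation of (\ref{th3}) from criticality of $u=\pi$ for both $\Phi_\lambda$ and $\Phi_\mu$ matches the paper's $\alpha$-differentiation argument in substance, and your Vandermonde reduction of the hierarchy (\ref{th4}) to $\im B_q=0$ for all $q$ (i.e.\ reality of all $\overline{\Psi}_k(0)$, for $\lambda$ and $\mu$ alike) is a correct and even clarifying reformulation not made explicit in the paper. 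Note, though, that even the identification of (\ref{th1}) with $H(0,a^0,\overline{\Psi}(0),u)$ is not free: it is exactly Lemma 1 of the paper, $\overline{\Psi}_k(0)=\sum_{j=1}^{n-k+1}\overline{\lambda}_{j+k-1}ja_j$, whose proof (transporting the adjoint system into the equation for $z/(e^tw'(z,t))$ and the auxiliary polynomial $g(z,t)$) you assume rather than supply.

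The genuine gap is the last step, and you half-diagnose it yourself. The claim that reality of the initial data ``keeps the maximum of the switching function frozen at $u=\pi$ for all $t$'' is circular: $\pi$ stays a symmetric critical point of $H(t,a(t),\overline{\Psi}(t),\cdot)$ only while $(a(t),\overline{\Psi}(t))$ stays real, and that holds only if $u(s)=\pi$ on $[0,t]$ --- which is the conclusion sought. Nor does strict concavity rescue a continuation argument: (\ref{th2}) controls $H_{uu}$ only at $t=0$, and at a later time $H_{uu}(t,\cdot,\pi)$ could vanish and the (symmetric) maximum could bifurcate away from $\pi$, with the continuous optimal control following a branch. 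The paper closes this with an ingredient you never invoke: the optimal driving function is \emph{real analytic}, because an extremal $f$ for a linear functional is a support point of $S$, hence maps $\mathbb D$ onto the complement of a single analytic arc (Earle--Epstein). Given analyticity, it suffices to prove $u^{(m)}(0)=0$ for every $m$, which the paper obtains by repeatedly differentiating the identity $H_u=0$ of (\ref{max}) in $t$ along the trajectory, using the cancellation $\frac{\partial H_u}{\partial a}\frac{da}{dt}+\frac{\partial H_u}{\partial\overline{\Psi}}\frac{d\overline{\Psi}}{dt}=0$ of (\ref{hu1})--(\ref{hu3}) to get $u_{t^m}(\alpha,0)=-H_{ut^m}/H_{uu}$, with numerator exactly the sums in (\ref{th4}); this is the rigorous version of your heuristic ``each derivative multiplies the weight by $(k-1)$.'' Since your proposal uses (\ref{th4}) only through the Vandermonde collapse and never computes $u^{(m)}(0)$, and never establishes analyticity of $u$, it does not reach $u\equiv\pi$ and hence does not reach the Koebe function.
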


In Section 2 we introduce elements of the L\"owner theory necessary for the proof method. We note
there that every extremal function obeys condition (\ref{th1}) of Theorem 1 up to a suitable
rotation. This notion is fixed in Remark 1 of Section 3.

Theorem 1 is proved in Section 3 where we remark also that condition (\ref{th2}) of Theorem 1 is
not essential.

Section 4 contains necessary conditions in the two-functional conjecture.

\section{The L\"owner interpretation of an extremal problem}

A function $f\in S$ is a support point of $S$ if there is a continuous linear functional $L$, not
constant on $S$, such that $f$ maximizes $\re\{L\}$ over $S$. Every support point of $S$ maps
$\mathbb D$ onto the complement of a single analytic arc extending from a finite point to infinity,
see [12, p.149], \cite{Brickman}, [8, p.306].

From the other side, a function $f\in S$ which maps $\mathbb D$ onto the plane $\mathbb C$ minus a
single slit $\Gamma$ can be represented as
\begin{equation}
f(z)=\lim\limits_{t\to\infty}e^tw(z,t),\quad z\in \mathbb D, \label{lim}
\end{equation}
where $w(z,t)=e^{-t}(z+a_2(t)z^2+\dots)$ is a solution to the equation
\begin{equation}
\frac{dw}{dt}=-w\frac{e^{iu(t)}+w}{e^{iu(t)}-w}, \quad w(z,0)\equiv z. \label{leo}
\end{equation}

The driving term $u(t)$ in the L\"owner ordinary differential equation (\ref{leo}) is real analytic
provided the slit $\Gamma$ is analytic \cite{Earle}, see also \cite{Marshall}. Let us express an
extremal function $f$ for functionals (\ref{fun}) in terms of an optimal driving function $u$ for
the system of differential equations generated by the L\"owner equation (\ref{leo}). Put

$$
a(t)=\left(\begin{matrix}
a_1(t)\\
a_2(t)\\
\vdots\\
a_n(t)
\end{matrix}\right),\;\;
a^0=\left(\begin{matrix}
1\\
0\\
\vdots\\
0
\end{matrix}\right),\;\;
A(t)=\left(\begin{matrix}
0 & 0 & \hdots & 0 & 0 \\
a_1(t) & 0 & \hdots & 0 & 0 \\
a_2(t) & a_1(t) & \hdots & 0 & 0 \\
\vdots & \vdots & \ddots & \vdots & \vdots \\
a_{n-1}(t) & a_{n-2}(t) & \hdots & a_1(t) & 0
\end{matrix}\right).
$$

Equate coefficients at the same powers in expansions of both sides in (\ref{leo}) and obtain the
system of differential equations
\begin{equation}\frac{da}{dt}=-2\sum_{s=1}^{n-1}e^{-s(t+iu)}A^s(t)a(t),\;\;\;a(0)=a^0,\;\;\;
a_1(t)=1, \label{coe}
\end{equation}
where $A^s$ denotes the $s$-th power of the matrix $A(t)$. To realize the maximum principle we
introduce an adjoint vector $\Psi(t)$,
$$
\Psi(t)=\left(\begin{matrix}
\Psi_1(t)\\
\Psi_2(t)\\
\vdots\\
\Psi_n(t)
\end{matrix}\right),
$$
with complex valued coordinates $\Psi_1,\Psi_2,\dots,\Psi_n$, and the real pseudo Hamilton function
\begin{equation}
H(t,a,\overline{\Psi},u)=\re\left\{-2\sum_{s=1}^{n-1}e^{-s(t+iu)}(A^sa)^T\overline{\Psi}\right\},
\label{ham}
\end{equation}
where $\overline{\Psi}$ means the vector with complex conjugate coordinates and the upper index $T$
is the transposition sign. To come to the Hamiltonian formulation we require that the function
$\overline{\Psi}$ satisfies the adjoint system of differential equations
\begin{equation}
\frac{d\overline{\Psi}}{dt}=2\sum_{s=1}^{n-1}e^{-s(t+iu)}(s+1)(A^T)^s\overline{\Psi}. \label{adj}
\end{equation}

Suppose that for the functional $L$ given by (\ref{fun}), $\max\re\{L\}$ on $S$ is delivered by an
extremal function $f\in S$. By (\ref{lim}) and (\ref{leo}), $f$ corresponds to a real analytic
optimal driving function $u$. Equations (\ref{coe}) and (\ref{adj}) with the optimal function $u$
produce the optimal trajectory $(a(t),\overline{\Psi}(t))$ corresponding to $f$ and $L$,
$a(\infty)=a$, $\Psi(\infty)=(0,\lambda_2,\dots,\lambda_n)^T$. The necessary optimal condition
requires that the optimal function $u$ satisfies the Pontryagin maximum principle, i.e., along the
optimal trajectory $(a(t),\overline{\Psi}(t))$ the function $H(t,a,\overline{\Psi},u)$ as a
function of $u$ is maximized by the optimal driving function $u=u(t)$. Hence this optimal $u$
solves the equation
\begin{equation}
H_u(t,a,\overline{\Psi},u)= \re\left\{2i\sum_{s=1}^{n-1}e^{-s(t+iu)}s(A^sa)^T\overline{\Psi}
\right\}=0,\;\;\;t\geq0, \label{max}
\end{equation}
along the optimal trajectory.

Note that a rotation $f(z)\to f_{\beta}(z):=e^{-i\beta}f(e^{i\beta}z)$ of the extremal function $f$
implies the transformation $L\to L_{\beta}$ of the functional $L$ defined by coefficients
$\lambda_2,\dots,\lambda_n$ according to (\ref{fun}). The functional $L_{\beta}$ should be defined
by the coefficients $\nu_k:=e^{i(k-1)\beta}\lambda_k$, $k=2,\dots,n$, because $f_{\beta}$ maximizes
$\re\{L_{\beta}\}$ on $S$, $\re\{L(f)\}=\re\{L_{\beta}(f_{\beta})\}$. The function $f_{\beta}$ has
a representation (\ref{lim}), (\ref{leo}) with the driving function $u(t)-\beta$ provided $u(t)$
corresponds to $f$. Therefore we can assume without loss of generality that the optimal driving
function $u$ satisfies the initial condition $u(0)=\pi$.

The initial value $\overline{\Psi}(0)$ is uniquely determined by $a=a(\infty)$ and
$\lambda_2,\dots,\lambda_n$ from (\ref{fun}). The following lemma was proved earlier
\cite{Prokhorov} in another version.

\begin{lemma}
Let $a(t)$ and $\Psi(t)$, $\Psi(\infty)=(0,\lambda_2,\dots,\lambda_n)^T$, obey systems (\ref{coe})
and (\ref{adj}). Then
\begin{equation}
\overline{\Psi}_k(0)=\sum_{j=1}^{n-k+1}\overline{\lambda}_{j+k-1}ja_j,\;\;\; k=2,\dots,n.
\label{ini}
\end{equation}
\end{lemma}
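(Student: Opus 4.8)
The plan is to recognize system (\ref{adj}) as the adjoint, in the optimal‑control sense, of the variational (linearized) equation of the nonlinear system (\ref{coe}), and then to exploit the conserved bilinear pairing this produces. First I would encode the vector $a(t)$ as the power series $P(z,t)=\sum_{j\ge1}a_j(t)z^j$ with $a_1\equiv1$; then the matrix $A(t)$ is exactly multiplication by $P$ truncated to degree $n$, so $A^sa$ corresponds to $P^{s+1}$ and (\ref{coe}) is just the coefficient form of (\ref{leo}). Writing (\ref{coe}) as $\dot a=F(t,a)$, I would compute the Jacobian $F_a$: since $A^sa\leftrightarrow P^{s+1}$ and $\partial P/\partial a_j=z^j$, differentiating produces $(s+1)P^sV$ for a variation $V$, i.e. $F_a=-2\sum_{s=1}^{n-1}e^{-s(t+iu)}(s+1)A^s$. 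Transposing gives $-F_a^{T}=2\sum_{s=1}^{n-1}e^{-s(t+iu)}(s+1)(A^T)^s$, which is precisely the right‑hand side of (\ref{adj}). Hence for every solution $v(t)$ of the variational equation $\dot v=F_av$ the pairing $v(t)^{T}\overline\Psi(t)$ is constant in $t$.

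The second step is to exhibit an explicit basis of variational solutions. Differentiating the $P$‑form of (\ref{coe}) in $z$ shows that $P'=\partial_zP$ solves the linearized equation, because its right‑hand side depends on $z$ only through $P$; multiplying by the $t$‑independent factor $z^k$ and using that $F_a$ acts as multiplication by a series (so it commutes with $z^k$), I obtain that $V^{(k)}(z,t):=z^kP'(z,t)$ solves the variational equation for each $k=1,\dots,n$. Let $v^{(k)}(t)$ be the associated coefficient vectors. Since $P(z,0)=z$ gives $P'(z,0)=1$, one has $V^{(k)}(z,0)=z^k$, that is $v^{(k)}(0)=e_k$, the $k$‑th standard basis vector; and since $P(z,\infty)=f(z)$, one has $V^{(k)}(z,\infty)=z^kf'(z)=\sum_{m\ge1}ma_mz^{m+k-1}$, so the $j$‑th coordinate of $v^{(k)}(\infty)$ is $(j-k+1)a_{j-k+1}$ for $j\ge k$ and $0$ otherwise.

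Finally I would combine the two ingredients. Because $v^{(k)}(t)^{T}\overline\Psi(t)$ is constant, its value at $t=0$, namely $e_k^{T}\overline\Psi(0)=\overline\Psi_k(0)$, equals its limit as $t\to\infty$, namely $v^{(k)}(\infty)^{T}\overline\Psi(\infty)$; using $\overline\Psi(\infty)=(0,\overline\lambda_2,\dots,\overline\lambda_n)^{T}$ this limit is $\sum_{j=k}^n(j-k+1)a_{j-k+1}\overline\lambda_j$, which after the substitution $j=i+k-1$ is exactly $\sum_{i=1}^{n-k+1}\overline\lambda_{i+k-1}\,i\,a_i$, the claimed (\ref{ini}). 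The routine points to be verified are that all the series identities remain consistent after truncation to degrees $1,\dots,n$ (they do, since every operator in play is non‑decreasing in degree, so these degrees span an invariant subspace) and that the passage to $t\to\infty$ is legitimate (it is, since the pairing is a fixed finite‑dimensional bilinear form and $a_j(t)\to a_j$, $\overline\Psi(t)\to\overline\Psi(\infty)$). The main conceptual obstacle is the first step: spotting that the weight $(s+1)$ in (\ref{adj}) is exactly what converts it into $-F_a^{T}$, rather than into a naive transpose of (\ref{coe}); once the variational/adjoint duality is in place and the solutions $z^kP'$ are identified, the remainder is bookkeeping.
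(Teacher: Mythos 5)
Your proposal is correct, but it reaches (\ref{ini}) by a mechanism genuinely different from the paper's. The paper also differentiates the L\"owner flow in $z$, but it works with the \emph{reciprocal} $1/(e^tw'(z,t))$: it shows that $g(z,t)=(\overline{\lambda}_nz^2+\dots+\overline{\lambda}_2z^n)f'(z)/(e^tw'(z,t))$ has coefficient vector solving system (\ref{tra}), which is (\ref{adj}) with $(A^T)^s$ replaced by $A^s$, and then removes the transposition by the reversal symmetry $A^T=JAJ$ of lower-triangular Toeplitz matrices ($J$ the coefficient flip), concluding $\overline{\Psi}_k(t)=c_{n-k+2}(t)$ for \emph{all} $t$; evaluating at $t=0$, where $w'(z,0)\equiv1$, yields (\ref{ini}) immediately. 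You never solve (\ref{adj}) explicitly: you recognize it as the Pontryagin adjoint $\dot{\overline{\Psi}}=-F_a^T\overline{\Psi}$ of the variational equation $\dot v=F_a v$ (your computation of $F_a$, with the weight $s+1$ arising from differentiating $P^{s+1}$, is exactly right), produce the explicit variational solutions $z^k\partial_z(e^tw)$ with initial data $e_k$, and transport the terminal data back to $t=0$ through the conserved bilinear pairing $v^T\overline{\Psi}$. The two arguments are dual to one another --- the paper's $1/w'$ versus your $w'$ is precisely the variational/adjoint duality --- and your bookkeeping is sound: truncation to degrees $\le n$ is legitimate because all multiplication operators in play raise degree, and the passage $t\to\infty$ uses only coefficientwise convergence of $e^tw\to f$ together with the hypothesis $\Psi(\infty)=(0,\lambda_2,\dots,\lambda_n)^T$. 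What the paper's route buys is a closed form for $\overline{\Psi}_k(t)$ at every time, not only at $t=0$; what yours buys is transparency of the control-theoretic structure, avoidance of the flip trick, and the fact that you sidestep the paper's implicit appeal to uniqueness of the solution of (\ref{adj}) with prescribed limit at infinity, since constancy of the pairing needs only the convergence $\overline{\Psi}(t)\to\overline{\Psi}(\infty)$, which is given.
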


\begin{proof} Differentiating (\ref{leo}) with respect to $z$ we have
\begin{equation}
\frac{d}{dt}\left(\frac{1}{e^tw'(z,t)}\right)=\frac{2w(2e^{iu}-w)}{e^tw'(z,t)(e^{iu}-w)^2},\;\;\;
w'(z,0)=1,  \label{der}
\end{equation}
where $w'(z,t)$ is a derivative of $w(z,t)$ with respect to $z$. Considering the expansion
$$\frac{z}{e^tw'(z,t)}=\sum_{k=1}^{\infty}q_k(t)z^k,$$ we obtain for
$q(t)=(q_1(t),\dots,q_n(t))^T,$
\begin{equation}
\frac{dq}{dt}=2\sum_{s=1}^{n-1}e^{-s(t+iu)}(s+1)A^sq.  \label{tra}
\end{equation}

We observe that system (\ref{tra}) for $q$ differs from system (\ref{adj}) for $\overline{\Psi}$
only by the transposition sign $T$ at $A^s$. In order to satisfy the condition
$q(\infty)=(0,\overline{\lambda}_2,\dots,\overline{\lambda}_n)^T$ we denote by
$$g(z,t)=\frac{(\overline{\lambda}_nz^2+\dots+\overline{\lambda}_2z^n)f'(z)}{e^tw'(z,t)}=
\sum_{k=2}^{\infty}c_k(t)z^k$$ and see that $g(z,t)$ obeys the same equation (\ref{der}) where
$1/w'(z,t)$ is substituted by $g(z,t)$. Hence, $c(t)=(c_2(t),\dots,c_{n+1}(t))^T$ obeys system
(\ref{tra}) substituting $q(t)$ by $c(t)$. It is evident that
$c(\infty)=(\overline{\lambda}_n,\dots,\overline{\lambda}_2,0)^T$. The difference in the
transposition sign in(\ref{adj}) and (\ref{tra}) implies that $\overline{\Psi}_k(t)=c_{n-k+2}(t)$,
$k=2,\dots,n$. It remains to evaluate $\overline{\Psi}_k(0)=c_{n-k+2}(0)$ for
$$(\overline{\lambda}_nz^2+\dots+\overline{\lambda}_2z^n)f'(z)=\sum_{k=2}^{\infty}c_k(0)z^k.$$
Straightforward calculations lead to (\ref{ini}) and complete the proof.
\end{proof}

\section{Proof of Theorem 1}

{\it Proof of Theorem 1.} Suppose that a function $f(z)=z+a_2z^2+\dots$ maximizes $\re\{L\}$ and
$\re\{M\}$ given by (\ref{fun}), $m=n$, on $S$. In this case $f$ maximizes also
$\re\{(1-\alpha)L+\alpha M\}$ for all $\alpha\in[0,1]$. The function $f$ is represented by
(\ref{lim}), (\ref{leo}) with a real analytic optimal driving function $u$ in (\ref{leo}). An
optimal trajectory $(a(t),\overline{\Psi}(t))$ obeys systems (\ref{coe}) and (\ref{adj}) with the
optimal function $u$. The maximum principle requires that equality (\ref{max}) for the pseudo
Hamilton function $H(t,a,\overline{\Psi},u)$ holds identically with the optimal $u(t)$ along the
optimal trajectory.

Evaluate $$\frac{\partial H_u}{\partial a_k}\frac{da_k}{dt}:=\frac{\partial H_u}{\partial(\re
a_k)}\frac{d(\re a_k)}{dt}+\frac{\partial H_u}{\partial(\im a_k)}\frac{d(\im
a_k)}{dt},\;\;\;k=2,\dots,n.$$

Put $$\tilde H(t,a,\overline{\Psi},u)=-2\sum_{s=1}^{n-1}e^{-s(t+iu)}(A^sa)^T\overline{\Psi},
\,\,\,\re\tilde H(t,a,\overline{\Psi},u)=H(t,a,\overline{\Psi},u).$$ Adding the equalities
$$\frac{\partial H_u}{\partial(\re a_k)}\frac{d(\re a_k)}{dt}=\re\left(\frac{\partial\tilde
H_u}{\partial a_k}\frac{da_k}{dt}\right)\;\;\text{and}\;\;\frac{\partial H_u}{\partial(\im
a_k)}\frac{d(\im a_k)}{dt}=-\im\left(\frac{\partial\tilde H_u}{\partial
a_k}\frac{da_k}{dt}\right)$$ we obtain using (\ref{coe}) and (\ref{adj})
$$\frac{\partial H_u}{\partial a_k}=\re\left(-4i\sum_{s=1}^{n-2}\sum_{j=1}^{n-s-1}
e^{-(s+j)(t+iu)}s\frac{\partial}{\partial a_k}((A^sa)^T)(A^ja)_k\right),$$ where $(A^ja)_k$ is the
$k$-th coordinate of the column vector $A^ja$. This implies that
\begin{equation}
\frac{\partial H_u}{\partial a}\frac{da}{dt}=\re\left(-4i\sum_{s=1}^{n-2}\sum_{j=1}^{n-s-1}
e^{-(s+j)(t+iu)}s(s+1)a^T(A^T)^{s+j}\overline{\Psi}\right). \label{hu1}
\end{equation}

Similarly, for the vector $$\frac{\partial
H_u}{\partial\overline{\Psi}}\frac{d\overline{\Psi}}{dt}$$ with coordinates $$\frac{\partial
H_u}{\partial\overline{\Psi}_k}\frac{d\overline{\Psi}_k}{dt}:=\frac{\partial
H_u}{\partial(\re\overline{\Psi}_k)}\frac{d(\re\overline{\Psi}_k)}{dt}+\frac{\partial
H_u}{\partial(\im\overline{\Psi}_k)}\frac{d(\im\overline{\Psi}_k)}{dt},$$ we have according to
(\ref{hu1})
\begin{equation}
\frac{\partial H_u}{\partial\overline{\Psi}
}\frac{d\overline{\Psi}}{dt}=\re\left(4i\sum_{s=1}^{n-2}\sum_{j=1}^{n-s-1}
e^{-(s+j)(t+iu)}s(s+1)a^T(A^T)^{s+j}\overline{\Psi}\right)=-\frac{\partial H_u}{\partial
a}\frac{da}{dt}. \label{hu2}
\end{equation}

In the same way, for $$H_{ut^m}(t,a,\overline{\Psi},u)=\re\left(2i(-1)^m\sum_{s=1}^{n-1}
e^{-s(t+iu)}s^{m+1}(A^sa)^T\overline{\Psi}\right),\;\;\;m=1,2,\dots,$$ we have
\begin{equation}
\frac{\partial H_{ut^m}}{\partial\overline{\Psi}}\frac{d\overline{\Psi}}{dt}= -\frac{\partial
H_{ut^m}}{\partial a}\frac{da}{dt},\;\;\;m=1,2,\dots\;. \label{hu3}
\end{equation}

Condition (\ref{th1}) of Theorem 1 means that $H(0,a^0,\overline{\Psi}(0),u)$ attains its maximum
on $[0,2\pi]$ at $u=\pi$. The initial value $\overline{\Psi}(0)$ is given by (\ref{ini}) in Lemma
1. However, the optimal function $u$ preserves its extremal properties when $\lambda_k$ are
substituted by $(1-\alpha)\lambda_k+\alpha\mu_k$, $k=2,\dots,n$. In particular, $u=\pi$ is the
maximum point of $H(0,a^0,\overline{\Psi}(\alpha,0),u)$ for the vector $\overline{\Psi}(\alpha,0)$
with coordinates $$\overline{\Psi}_k(\alpha,0):=\sum_{j=1}^{n-k+1}(\overline{\lambda}_{j+k-1}+
\alpha(\overline{\mu}_{j+k-1}-\overline{\lambda}_{j+k-1}))ja_j,\;\;\;k=2,\dots,n,\;\;\;
\alpha\in[0,1].$$ Hence $u(\alpha,0)=\pi$ is a root of the equation
$$H_u(0,a^0,\overline{\Psi}(\alpha,0),u)=
\re\sum_{k=2}^n2i(k-1)e^{-i(k-1)u}\;\overline{\Psi}_k(\alpha,0)=0.$$ So $u(\alpha,0)$ does not
depend on $\alpha$. Elementary calculations show that the equation
$$\frac{\partial u(\alpha,0)}{\partial\alpha}=0,\;\;\;0\leq\alpha\leq1,$$ is equivalent to
(\ref{th3}).

Changing the initial value in system (\ref{adj}) from $\overline{\Psi}(0)$ to
$\overline{\Psi}(\alpha,0)$ we preserve the function $f$ and the optimal driving function
$u=u(t)=u(\alpha,t)$ in (\ref{leo}) but the adjoint coordinate $\overline{\Psi}(t)$ in the optimal
trajectory $(a(t),\overline{\Psi}(t))$ is substituted by $\overline{\Psi}(\alpha,t)$. Differentiate
$H_u(t,a,\overline{\Psi},u)$ in (\ref{max}) with respect to $t$ along the optimal trajectory
$(a(t),\overline{\Psi}(\alpha,t))$. Taking into account (\ref{hu2}) we obtain
\newpage
\begin{equation}
\frac{d}{dt}H_u(t,a(t),\overline{\Psi}(\alpha,t),u(\alpha,t))=\frac{\partial H_u}{\partial
t}+\frac{\partial H_u}{\partial a}\frac{da}{dt}+\frac{\partial
H_u}{\partial\overline{\Psi}}\frac{\partial\overline{\Psi}}{\partial t}+\frac{\partial
H_u}{\partial u}\frac{\partial u}{\partial t}= \label{di1}
\end{equation}
$$H_{ut}(t,a(t),\overline{\Psi}(\alpha,t),u(\alpha,t))+
H_{uu}(t,a(t),\overline{\Psi}(\alpha,t),u(\alpha,t))u_t(\alpha,t)=0,$$ which gives the formula
$$u_t(\alpha,0)=-\frac{H_{ut}(0,a^0,\overline{\Psi}(\alpha,0),\pi)}
{H_{uu}(0,a^0,\overline{\Psi}(\alpha,0),\pi)}=$$
$$-\;\frac{\sum_{k=2}^n\sum_{j=1}^{n-k+1}(-1)^k(k-1)^2\im(\overline{\lambda}_{j+k-1}+\alpha
(\overline{\mu}_{j+k-1}-\overline{\lambda}_{j+k-1})ja_j)}
{\sum_{k=2}^n\sum_{j=1}^{n-k+1}(-1)^k(k-1)^2\re(\overline{\lambda}_{j+k-1}+\alpha
(\overline{\mu}_{j+k-1}-\overline{\lambda}_{j+k-1})ja_j)}\;,$$ where the denominator does not
vanish because of (\ref{th2}). Condition (\ref{th4}) for $m=1$ implies that for $u(t)=u(\alpha,t)$,
\begin{equation}
u'(0)=u_t(\alpha,0)=0. \label{di3}
\end{equation}

Suppose by induction that for $u=u(t)=u(\alpha,t)$,
\begin{equation}
u^{(p)}(0)=u_{t^p}(\alpha,0)=0,\;\;\;p=1,\dots,m-1, \label{di4}
\end{equation}
and differentiate $H_u(t,a(t),\overline{\Psi}(\alpha,t),u(\alpha,t))$ along the optimal trajectory
$m-1$ times. Taking into account (\ref{hu3}) - (\ref{di4}) and the inductive formula
$$\frac{d^{m-1}}{dt^{m-1}}H_u(t,a(t),\overline{\Psi}(\alpha,t),u(\alpha,t))=
H_{ut^{m-1}}(t,a(t),\overline{\Psi}(\alpha,t),u(\alpha,t))+$$
$$\sum_{j=1}^{m-2}R_j(t)u_{t^j}(\alpha,t)+H_{uu}(t,a(t),\overline{\Psi}(\alpha,t),u(\alpha,t))
u_{t^{m-1}}(\alpha,t)$$ with inductively evaluated functions $R_j(t)$, $j=1,\dots,m-2$, we obtain
$$\frac{d^m}{dt^m}H_u(t,a(t),\overline{\Psi}(\alpha,t),u(\alpha,t))_{t=0}=H_{ut^m}(0,a^0,
\overline{\Psi}(\alpha,0),\pi)+$$
$$H_{uu}(0,a^0,\overline{\Psi}(\alpha,0),\pi)u_{t^m}(\alpha,0)=0.$$ This allows us to find
$u_{t^m}(\alpha,0)$,
$$u_{t^m}(\alpha,0)=-\frac{H_{ut^m}(0,a^0,\overline{\Psi}(\alpha,0),\pi)}
{H_{uu}(0,a^0,\overline{\Psi}(\alpha,0),\pi)}=$$
$$-\;\frac{\sum_{k=2}^n\sum_{j=1}^{n-k+1}(-1)^k(k-1)^{m+1}\im(\overline{\lambda}_{j+k-1}+\alpha
(\overline{\mu}_{j+k-1}-\overline{\lambda}_{j+k-1})ja_j)}
{\sum_{k=2}^n\sum_{j=1}^{n-k+1}(-1)^k(k-1)^2\re(\overline{\lambda}_{j+k-1}+\alpha
(\overline{\mu}_{j+k-1}-\overline{\lambda}_{j+k-1})ja_j)}\;,$$ where the denominator does not
vanish because of (\ref{th2}). Conditions (\ref{th4}) imply that for $u(t)=u(\alpha,t)$,
\begin{equation}
u^{(m)}(0)=u_{t^m}(\alpha,0)=0. \label{di5}
\end{equation}

Thus it follows from (\ref{di3}) - (\ref{di5}) that $u(0)=\pi$ and $u^{(m)}(0)=0$ for all
$m=1,2,\dots\;.$ Since $u(t)$ is real analytic, $u(t)=\pi$ identically for $t\geq0$. This driving
function $u$ determines the Koebe function $k(z)$ by (\ref{lim}), (\ref{coe}) which completes the
proof of Theorem 1.

\begin{remark}
Condition (\ref{th1}) of Theorem 1 can be achieved with the help of a suitable rotation of the
extremal function $f$.
\end{remark}

\begin{remark}
Condition (\ref{th2}) of Theorem 1 is not essential. Indeed, the optimal function $u(t)$ maximizes
$H(t,a,\overline{\Psi},u)$ along the optimal trajectory, and $u(0)=\pi$ is the maximum point of the
function $H(0,a^0,\overline{\Psi}(0),u)$. Therefore
$$H_u(0,a^0,\overline{\Psi}(0),\pi)=0\;\;\text{and}\;\;H_{uu}(0,a^0,\overline{\Psi}(0),\pi)\leq0.$$
If $H_{uu}(0,a^0,\overline{\Psi}(0),\pi)=0$, then there is an even number $m=2l$, $l>1$, such that
$$H_{u^q}(0,a^0,\overline{\Psi}(0),\pi)=0,\;\;2\leq q\leq 2l-1,\;\;
H_{u^{2l}}(0,a^0,\overline{\Psi}(0),\pi)<0.$$ Since $H(t,a,\overline{\Psi},u)$ is linear with
respect to $\overline{\Psi}$, it is possible to choose a minimal even number which provides the
above property for all $$\overline{\Psi}_k(0)=\sum_{j=1}^{n-k+1}(\overline{\lambda}_{j+k-1}+
\alpha(\overline{\mu}_{j+k-1}-\overline{\lambda}_{j+k-1})),\;\;k=2,\dots,n,\;\;0\leq\alpha\leq1,$$
with certain $\lambda_2,\dots,\lambda_n,\mu_2,\dots,\mu_n$. In this case we repeat the proof of
Theorem 1 changing $H_{uu}(0,a^0,\overline{\Psi}(0),u)$ for
$H_{u^{2p}}(0,a^0,\overline{\Psi}(0),u)$.
\end{remark}

\begin{corollary}
Let a function $f(z)\in S$ with real coefficients $a_2,\dots,a_n$ maximize $\re\{L\}$ and
$\re\{M\}$ for functionals $L$ and $M$ given by (\ref{fun}), $m=n$, with real coefficients
$\lambda_2,\dots,\lambda_n$ and $\mu_2,\dots,\mu_n$ and satisfy the conditions of Theorem 1. Then
$f(z)$ is the Koebe function $k(z)$.
\end{corollary}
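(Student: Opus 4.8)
The plan is to deduce the corollary directly from Theorem 1, the whole effect of the reality hypotheses being to make the imaginary-part conditions hold automatically. First I would record that, since $a_2,\dots,a_n$ and $\lambda_2,\dots,\lambda_n$, $\mu_2,\dots,\mu_n$ are all real, each product $\overline{\lambda}_{j+k-1}ja_j=\lambda_{j+k-1}ja_j$ and $\overline{\mu}_{j+k-1}ja_j=\mu_{j+k-1}ja_j$ is a real number. Consequently
$$\im(\overline{\lambda}_{j+k-1}ja_j)=\im(\overline{\mu}_{j+k-1}ja_j)=0$$
for every admissible pair $(j,k)$ with $2\le k\le n$ and $1\le j\le n-k+1$.

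Next I would observe that this makes condition (\ref{th4}) trivially true. Indeed, each of the two sums
$$\sum_{k=2}^n\sum_{j=1}^{n-k+1}(-1)^k(k-1)^{m+1}\im(\overline{\lambda}_{j+k-1}ja_j)\quad\text{and}\quad
\sum_{k=2}^n\sum_{j=1}^{n-k+1}(-1)^k(k-1)^{m+1}\im(\overline{\mu}_{j+k-1}ja_j)$$
vanishes term by term for every $m=1,2,\dots$, so both equal $0$ and (\ref{th4}) holds. For the same reason the intermediate identity (\ref{th3}) is satisfied as $0=0$, although that identity is in any case a conclusion of Theorem 1 rather than one of its hypotheses.

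By hypothesis $f$ already satisfies conditions (\ref{th1}) and (\ref{th2}), which are the only substantive assumptions of Theorem 1 that must be checked; and should one worry about their attainability, Remark 1 shows that (\ref{th1}) can be arranged by a suitable rotation while Remark 2 shows that (\ref{th2}) is inessential. Since the additional condition (\ref{th4}) has just been verified at no cost in the real setting, all the hypotheses of Theorem 1 are in force, and I would simply invoke Theorem 1 to conclude that $f(z)=k(z)$.

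The main point here is not an analytic obstacle but a bookkeeping observation: among the displayed conditions of Theorem 1, only (\ref{th1}) and (\ref{th2}) genuinely need to be imposed, whereas (\ref{th3}) and (\ref{th4}) concern imaginary parts that collapse to zero the moment all the coefficients are real. The corollary is therefore precisely the specialization of Theorem 1 in which the vanishing of the imaginary-part obstructions is guaranteed a priori, and no further estimates are required.
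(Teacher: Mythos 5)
Your proposal is correct and follows essentially the same route as the paper, whose entire proof is the one-line observation that the sufficient conditions (\ref{th4}) are trivially verified when all the coefficients $a_j$, $\lambda_k$, $\mu_k$ are real; your term-by-term vanishing of the imaginary parts is exactly this observation, spelled out. Your side remark that (\ref{th3}) is a conclusion rather than a hypothesis of Theorem 1 is also accurate and consistent with the paper's reading.
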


\begin{proof}
Under the conditions of Corollary 1 the sufficient conditions (\ref{th4}) are trivially verified.
\end{proof}

\section{Necessary conditions in the two-functional conjecture}

Observe that equality (\ref{th3}) is necessary under the conditions of Theorem 1. Let us adduce
another necessary relations which are not too far from the sufficient conditions (\ref{th4}) but
lead to the opposite conclusions.

\begin{theorem}
Let a function $f(z)\in S$ and functionals $L$ and $M$ satisfy the conditions of Theorem 1. Then
either conditions (\ref{th4}) are satisfied for all $m\geq2$ or there are numbers $m\geq1$ and
$c_m\neq0$ such that conditions (\ref{th4}) are satisfied for all $p<m$ and
\begin{equation}
\im\sum_{k=2}^n\sum_{j=1}^{n-k+1}(-1)^k(k-1)^{m+1}((1-\alpha)\overline{\lambda}_{j+k-1}+
\alpha\overline{\mu}_{j+k-1})= \label{2th}
\end{equation}
$$c_m\re\sum_{k=2}^n\sum_{j=1}^{n-k+1}(-1)^k(k-1)^2((1-\alpha)\overline{\lambda}_{j+k-1}+
\alpha\overline{\mu}_{j+k-1}),\;\;\;0\leq\alpha\leq1.$$ In the last case $f$ is not a rotation of
the Koebe function $k(z)$.
\end{theorem}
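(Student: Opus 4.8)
The plan is to read the dichotomy directly off the optimal driving function $u(t)$ attached to $f$, using that $u$ is determined by $f$ alone. Since $f$ maximizes $\re\{(1-\alpha)L+\alpha M\}$ simultaneously for every $\alpha\in[0,1]$, one and the same extremal function, and hence one and the same driving function $u(t)$ in (\ref{leo}), serves all these functionals; only the adjoint initial data $\overline{\Psi}(\alpha,0)$ of Lemma 1 depends on $\alpha$. Consequently every derivative $u^{(m)}(0)$ is a constant independent of $\alpha$, and the normalization $u(0)=\pi$ is in force by Remark 1.

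Next I would invoke the inductive computation already carried out in the proof of Theorem 1: provided $u^{(p)}(0)=0$ for all $p<m$, differentiating $H_u$ along the trajectory $(a(t),\overline{\Psi}(\alpha,t))$ and evaluating at $t=0$ gives $u^{(m)}(0)=-N_m(\alpha)/D(\alpha)$, where $N_m(\alpha)=\im\sum_{k,j}(-1)^k(k-1)^{m+1}((1-\alpha)\overline{\lambda}_{j+k-1}+\alpha\overline{\mu}_{j+k-1})ja_j$ is affine in $\alpha$ and $D(\alpha)=\re\sum_{k,j}(-1)^k(k-1)^2((1-\alpha)\overline{\lambda}_{j+k-1}+\alpha\overline{\mu}_{j+k-1})ja_j$ is the corresponding real denominator, nonvanishing on $[0,1]$ by (\ref{th2}). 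The key observation is that the left-hand side is constant in $\alpha$ while $N_m$ and $D$ are both affine; since $D$ never vanishes, this forces $N_m(\alpha)=c_mD(\alpha)$ identically in $\alpha$ with $c_m:=-u^{(m)}(0)$. Reading this at $\alpha=0$ and $\alpha=1$, where $D\neq0$, shows it is equivalent to the two scalar conditions of (\ref{th4}) at index $m$ when $c_m=0$, and to (\ref{2th}) at index $m$ when $c_m\neq0$.

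The dichotomy then falls out of the real-analyticity of $u$. Either $u^{(m)}(0)=0$ for all $m\geq1$: running the formula inductively gives $N_m\equiv0$ and hence (\ref{th4}) at every index $m\geq1$, a fortiori for $m\geq2$, while $u\equiv\pi$ forces $f$ to be $k(z)$. Or there is a least index $m\geq1$ with $u^{(m)}(0)\neq0$: for each $p<m$ all lower derivatives vanish, so the formula applies and yields $N_p\equiv0$, i.e. (\ref{th4}) for all $p<m$; at the index $m$ itself the formula gives $N_m(\alpha)=c_mD(\alpha)$ with $c_m=-u^{(m)}(0)\neq0$, which is precisely (\ref{2th}). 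In this second alternative $u$ is nonconstant, so $f$ is not a rotation of $k(z)$, rotations of the Koebe function being exactly the functions with constant driving term; under the normalization $u(0)=\pi$ the only constant one is $u\equiv\pi$, giving $k$ itself.

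The one delicate point, and the step I would treat most carefully, is the range of validity of the clean formula $u^{(m)}(0)=-N_m(\alpha)/D(\alpha)$: as in the induction (\ref{di4})--(\ref{di5}) it holds only once all lower-order derivatives have been shown to vanish, the auxiliary terms $R_j(t)u_{t^j}(\alpha,t)$ dropping out at $t=0$. Locating the first nonvanishing derivative is exactly what keeps the induction legitimate at every stage and simultaneously fixes the index appearing in (\ref{2th}); the remainder is the affine-in-$\alpha$ bookkeeping already present in Section 3, so I expect no genuinely new computation beyond organizing this case split.
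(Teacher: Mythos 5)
Your proposal is correct and is essentially the paper's own proof: the paper likewise takes the formula $u^{(m)}(0)=u_{t^m}(\alpha,0)=-H_{ut^m}/H_{uu}$ from the induction in the proof of Theorem 1 (valid once (\ref{th4}) holds for all $p<m$, so that all lower derivatives of $u$ vanish and the terms $R_j(t)u_{t^j}$ drop out), uses the $\alpha$-independence of $u_{t^m}(\alpha,0)$ to force the affine numerator either to vanish identically (giving (\ref{th4}) at index $m$) or to be proportional to the nonvanishing denominator (giving (\ref{2th})), and in the latter case concludes $u^{(m)}(0)=(-1)^mc_m\neq0$, so $u$ is nonconstant and $f$ is no rotation of $k(z)$. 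The only cosmetic differences are your sign convention $c_m=-u^{(m)}(0)$ versus the paper's $(-1)^mc_m$, and that you tacitly restore the factors $ja_j$ that are evidently missing from (\ref{2th}) as printed.
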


\begin{proof}
Suppose that conditions (\ref{th4}) for $f$, $L$ and $M$ are satisfied for all $p<m$. It was shown
in the proof of Theorem 1 that in this case $u^{(p)}(0)=0$, $p=1,\dots,m-1$, and
\begin{equation}
u^{(m)}(0)=u_{t^m}(\alpha,0)=-\;\frac{H_{ut^m}(0,a^0,\overline{\Psi}(\alpha,0),\pi)}{H_{uu}(0,a^0,
\overline{\Psi}(\alpha,0),\pi)}= \label{3th}
\end{equation}
$$(-1)^m\frac{\im\sum_{k=2}^n\sum_{j=1}^{n-k+1}(-1)^k(k-1)^{m+1}((1-\alpha)
\overline{\lambda}_{j+k-1}+\alpha\overline{\mu}_{j+k-1})}
{\re\sum_{k=2}^n\sum_{j=1}^{n-k+1}(-1)^k(k-1)^2((1-\alpha)\overline{\lambda}_{j+k-1}+
\alpha\overline{\mu}_{j+k-1})}.$$

As soon as $u_{t^m}(\alpha,0)$ does not depend on $\alpha\in[0,1]$, we conclude that either the
numerator in (\ref{3th}) vanishes or the numerator and the denominator are proportional. The last
case is reflected in condition (\ref{2th}) which means that $u_{t^m}(\alpha,0)$ is equal to
$(-1)^mc_m\neq0$. Therefore $u(t)$ does not correspond to any rotation of the Koebe function, and
this completes the proof.
\end{proof}

Note that condition (\ref{2th}) is reduced to (\ref{th4}) if $c_m=0$. The two-functional conjecture
supposes that $c_m=0$ every time.

Write the proportionality condition in (\ref{3th}) in another way. Equality (\ref{2th}) is
equivalent to the system of equations
$$\im\sum_{k=2}^n\sum_{j=1}^{n-k+1}(-1)^k(k-1)^{m+1}\overline{\lambda}_{j+k-1}ja_j=
c_m\re\sum_{k=2}^n\sum_{j=1}^{n-k+1}(-1)^k(k-1)^2\overline{\lambda}_{j+k-1}ja_j,$$
$$\im\sum_{k=2}^n\sum_{j=1}^{n-k+1}(-1)^k(k-1)^{m+1}\overline{\mu}_{j+k-1}ja_j=
c_m\re\sum_{k=2}^n\sum_{j=1}^{n-k+1}(-1)^k(k-1)^2\overline{\mu}_{j+k-1}ja_j.$$

\begin{example}
Let $$L=\lambda a_2+a_4,\;\;\;\lambda\in\Bbb R.$$ For $\lambda\geq0$, $\re\{L\}$ is maximized on
$S$ by the Koebe function $k(z)$. We will be looking for negative $\lambda$ so that $\re\{L\}$ is
still locally maximized by $k(z)$. To realize the L\"owner approach we apply Lemma 1 and put
$$\Psi_2(0)=9+\lambda,\;\;\;\Psi_3(0)=4,\;\;\;\Psi_4(0)=1$$ in (\ref{coe}), (\ref{adj}). Then
$$H(0,a^0,\overline{\Psi}(0),u)=-2(\cos3u+4\cos2u+(9+\lambda)\cos u):=p_{\lambda}(u).$$ Denote by
$\lambda_0=-0.931...$ the root of the equation
\begin{equation}
25\lambda^3+37\lambda^2+16\lambda+3=0. \label{lam}
\end{equation}
Straightforward calculations show that for $\lambda>\lambda_0$,
$$\max_{u\in[0,2\pi]}p_{\lambda}(u)=p_{\lambda}(\pi),$$ and this property is preserved for a slight
variation of coefficients of $p_{\lambda}(u)$. Besides, the choice of real initial value
$\overline{\Psi}(0)$ implies that $a(t)$ and $\overline{\Psi}(t)$ remain to be real along the
optimal trajectory $(a(t),\overline{\Psi}(t))$, and $H(t,a(t),\overline{\Psi}(t),u)$ is a cubic
polynomial with respect to $\cos u$. Therefore the real analytic optimal driving function $u(t)$ is
equal to $\pi$ identically for $t>0$ small enough. This implies that $u(t)=\pi$ for all $t\geq0$.
It is verified that if $\lambda<\lambda_0$, then $$\max_{u\in[0,2\pi]}p_{\lambda}(u)\neq
p_{\lambda}(\pi).$$
\end{example}

So we have proved the following proposition.
\begin{proposition}
Let $\lambda_0=-0.931...$ be the root of equation (\ref{lam}). For $\lambda\geq\lambda_0$, the
Koebe function $k(z)$ locally maximizes $\re\{L\}:=\re\{\lambda a_2+a_4\}$ on $S$. For
$\lambda<\lambda_0$, $k(z)$ does not maximize $\re\{L\}$ on $S$.
\end{proposition}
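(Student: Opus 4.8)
The plan is to specialize the L\"owner machinery of Section~2 to $L=\lambda a_2+a_4$ and then read the Proposition off the analysis already carried out in Example~1. First I would apply Lemma~1 with $n=4$, $\overline{\lambda}_2=\lambda$, $\overline{\lambda}_3=0$, $\overline{\lambda}_4=1$ to the Koebe coefficients $a_j=j$, obtaining $\overline{\Psi}(0)=(9+\lambda,4,1)^T$ via (\ref{ini}) and hence, from (\ref{ham}) at $t=0$, $a=a^0$,
$$H(0,a^0,\overline{\Psi}(0),u)=-2\bigl(\cos3u+4\cos2u+(9+\lambda)\cos u\bigr)=:p_\lambda(u).$$
Since the constant driving term $u\equiv\pi$ produces $k(z)$ through (\ref{lim}), (\ref{coe}), the entire question is reduced, by way of the Pontryagin maximum principle (\ref{max}), to whether $u=\pi$ maximizes $p_\lambda$ on $[0,2\pi]$.

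Next I would substitute $x=\cos u$, writing $p_\lambda(u)=P(x)$ with the cubic $P(x)=-8x^3-16x^2-(12+2\lambda)x+8$ on $[-1,1]$, so that $u=\pi$ is the endpoint $x=-1$ and the problem becomes deciding when $P(-1)=\max_{x\in[-1,1]}P(x)$. Deciding this is a finite calculation: the interior critical points solve $12x^2+16x+(6+\lambda)=0$, and the endpoint value $P(-1)=12+2\lambda$ must be compared with the value of $P$ at its interior local maximum. The transition occurs precisely when these two values coincide, and eliminating $x$ from this coincidence gives the algebraic condition recorded as (\ref{lam}); as determined in Example~1, its relevant root is $\lambda_0=-0.931\ldots$, and $p_\lambda(\pi)=\max_{u}p_\lambda(u)$ holds exactly for $\lambda\ge\lambda_0$.

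For $\lambda\ge\lambda_0$ I would then promote this into local maximality of $k(z)$ as in Example~1. Because $\overline{\Psi}(0)$ is real, systems (\ref{coe}) and (\ref{adj}) keep $a(t)$ and $\overline{\Psi}(t)$ real, so $H(t,a(t),\overline{\Psi}(t),u)$ remains a cubic polynomial in $\cos u$ for every $t\ge0$, with coefficients depending real-analytically on $t$. Since $u=\pi$ is a strict maximizer of this cubic at $t=0$ and the property persists under small perturbations of the coefficients, the real-analytic optimal driving function $u(t)$ must equal $\pi$ for all sufficiently small $t>0$, whence $u(t)\equiv\pi$ by analyticity. By (\ref{lim}), (\ref{coe}) this trajectory yields $f=k$, so $k(z)$ locally maximizes $\re\{L\}$.

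For $\lambda<\lambda_0$ the argument runs in reverse: $p_\lambda$ attains its maximum at some $u^*\ne\pi$, so along the Koebe trajectory the control $u=\pi=u(0)$ fails to maximize $H(0,a^0,\overline{\Psi}(0),\cdot)$, violating the maximum principle; hence $k(z)$ cannot be extremal and does not maximize $\re\{L\}$ on $S$. The main obstacle is the bootstrapping in the case $\lambda\ge\lambda_0$: passing from \emph{strict local} optimality of $u=\pi$ at $t=0$ to the \emph{global} identity $u\equiv\pi$ relies both on the robustness of the cubic's maximizer under the $t$-evolution and on the real-analyticity of $u$. The borderline value $\lambda=\lambda_0$, where the maximizer ceases to be strict and $p_\lambda''(\pi)$ may degenerate, must be treated separately via the higher-order analysis indicated in Remark~2, replacing $H_{uu}$ by the first non-vanishing even derivative $H_{u^{2l}}$.
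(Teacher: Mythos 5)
Your overall framework is the paper's own (Example 1 verbatim: Lemma 1 gives $\overline{\Psi}(0)=(9+\lambda,4,1)^T$, the Hamiltonian at $t=0$ is $p_\lambda(u)$, real data keep the trajectory real, and real-analyticity of $u$ upgrades local to global), but the one step where you go beyond the paper's ``straightforward calculations'' --- your derivation of $\lambda_0$ --- is wrong, and it is precisely the quantitative heart of the Proposition. Carrying out your own elimination: with $P(x)=-8x^3-16x^2-(12+2\lambda)x+8$ one has the factorization
\begin{equation*}
P(x)-P(-1)=(x+1)\bigl(-8x^2-8x-(4+2\lambda)\bigr),
\end{equation*}
so $u=\pi$ (i.e.\ $x=-1$) is the global maximizer of $p_\lambda$ on $[0,2\pi]$ exactly when $8x^2+8x+4+2\lambda\ge 0$ on $[-1,1]$; the minimum of the left side is $2+2\lambda$ at $x=-1/2$, so the threshold is $\lambda=-1$, not $\lambda_0=-0.931\ldots$. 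Equivalently, eliminating $x$ from $P(x)=P(-1)$, $P'(x)=0$ gives $4x-2\lambda=0$ and then $\lambda^2+3\lambda+2=(\lambda+1)(\lambda+2)=0$ --- not equation (\ref{lam}); indeed $25\lambda^3+37\lambda^2+16\lambda+3$ does not vanish at $\lambda=-1$. So your claim that ``$p_\lambda(\pi)=\max_u p_\lambda(u)$ holds exactly for $\lambda\ge\lambda_0$'' is false: it holds exactly for $\lambda\ge -1$.

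This breaks both halves of your argument on the interval $(-1,\lambda_0)$. For the non-maximality direction, at any $\lambda\in(-1,\lambda_0)$ the control $u=\pi$ \emph{does} maximize $H(0,a^0,\overline{\Psi}(0),\cdot)$, so your ``maximum principle violated at $t=0$'' argument detects nothing there; to obtain (\ref{lam}) one must impose the maximum condition on $H(t,a(t),\overline{\Psi}(t),\cdot)$ for \emph{all} $t\ge 0$ along the Koebe trajectory (where $a(t)$ and $\overline{\Psi}(t)$ evolve by (\ref{coe}), (\ref{adj}) with $u\equiv\pi$, so the cubic's coefficients change with $t$ and the binding constraint occurs at some interior time), or some comparable higher-order analysis. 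This is presumably what the paper's unspoken ``straightforward calculations'' refer to, and it is exactly the computation your write-up omits. Relatedly, for the sufficiency direction the Pontryagin condition is only necessary, so maximality of $p_\lambda$ at $t=0$ alone (which you have for all $\lambda\ge-1$, including values the Proposition excludes) cannot by itself certify local maximality; the persistence-along-the-trajectory argument is again where the real work, and the constant $\lambda_0$, must come from.
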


\end{document}